\theoremstyle{theorem}
\newtheorem{theorem}{Theorem}[section]
\newtheorem{proposition}{Proposition}[theorem]
\theoremstyle{definition}
\newtheorem{definition}[theorem]{Definition}
\newtheorem{example}[theorem]{Example}
\theoremstyle{remark}
\newtheorem{remark}[theorem]{Remark}
\title{
    Series of quasi-uniform scatterings with fast search,
    root systems and neural network classifications
}
\author{
  Igor V. Netay
  \thanks{Joint Stock "Research and Production company ``Kryptonite"}
  \thanks{Institute for Information Transmission Problems, Russian Academy of Sciences}
  \href{mailto:i.netay@kryptonite.ru}{i.netay@kryptonite.ru}
}
\date{}
\begin{document}
\maketitle

\begin{abstract}
    In this paper we describe an approach to construct large extendable 
    collections of vectors in predefined spaces of given dimensions. 
    These collections are useful for neural network latent space configuration 
    and training.
    For classification problem with large or unknown number of classes this 
    allows to construct classifiers without classification layer 
    and extend the number of classes without retraining of network from the very beginning.
    The construction allows to create large well-spaced vector collections 
    in spaces of minimal possible dimension. 
    If the number of classes is known or approximately predictable,
    one can choose sufficient enough vector collection size. 
    If one needs to significantly extend the number of classes, one can 
    extend the collection in the same latent space, or to incorporate 
    the collection into collection of higher dimensions with same 
    spacing between vectors. 
    Also, regular symmetric structure of constructed vector collections 
    can significantly simplify problems of search 
    for nearest cluster centers or embeddings in the latent space.
    Construction of vector collections is based on combinatorics 
    and geometry of semi-simple Lie groups irreducible representations with highest weight.
\end{abstract}

\section{Introduction}

Classification problems with large number of classes arise frequently 
in modern machine learning tasks. 

High error rate and hardware limitations restricts constructions of classifiers 
in the space with dimension equal to the number of classes. 

For classifications with large number of classes following approach is applied. 
Choose a vector space~$E$ (we call it \textit{embedding space}) of large enough dimension.
Let a neural network inference to be a vector in~$E$ (\textit{embedding}). 
Then classes correspond to some clusters of points in~$E$. 
For a given dataset corresponding to a number of clusters in~$E$, 
classification problem is downshifted to the problem of the nearest cluster center 
search with some distance function. 
Choice of distance function depends on network construction and training. 

Cluster center locations in this approach are unknown and can change during network training. 
The goal of training is to put embeddings within the same cluster closer 
and to move clusters far from each other. 
In~\cite{NG1} it was proposed to predefine cluster centers in space of small dimension 
for classifications with small number of classes. 

The problem of classification with large number of classes becomes more 
challenging if we require that the set of classes to be extendable without 
re-training the classifier from scratch. 
This means that if we add some new classes, we only need to tune the network 
for new classes, but it still can classify old classes. 

For large number of classes there is a bottleneck in finding nearest 
cluster center or nearest embeddings. 
There are some approaches to circumvent it like 
vector databases~(\cite{RAG}) or graph search algorithms~(\cite{GS}). 

Problem of nearest cluster center search may be effectively solved if 
we train a network with predefined cluster centers even if we do not know 
the number of clusters in advance. 
The search can become effective if we choose some symmetric regular 
geometrical configuration of cluster centers. 
So, the problem is how to construct series of points such that we can 
extend it, embed in higher dimension and satisfy some additional conditions 
(for example, to choose points in some set, say, sphere if embeddings are 
normalized in network inference).

Suppose we have to find a number of nearest embeddings from dataset to a given embedding. 
Then we can optimize it by restriction of search on the subset of 
data samples corresponding only to the nearest clusters. 
At the same time, we do not need to run over all the cluster centers to find nearest ones. 

Although all the computations are done with floating point numbers, 
we here omit the possible inaccuracies, which are in the scope of separate investigation 
and consider all the data to be real numbers. 
In practice, we never need too large number of clusters that embeddings 
constructed in \texttt{float32} become close enough that 
rounding errors could spoil the results. 
Instead, one can increase the dimension. 

Let us introduce some notation. 

Let $E$ be a vector space of dimension~$n$ and $K\subset E$ be a compact subset of~$E$. 

\begin{definition}
  We call a sequence of point $\{C_{d,k}\}_{k=1}^{\infty} \subset K$ 
  a \textit{quasi-uniform scattering} with support~$K\subset \mathbb{R}^d$ and exponent~$R$,
  if for any~$n\in\mathbb{N}$ for the distance 
    \[
        D_{n,N} = \min_{m \ne n \atop m, n \leqslant N} \rho(C_{d,m}, C_{d,n})
    \]
    from point in the initial segment to closest neighbors in this segment 
    the following inequation holds:
    \[
        \frac{
            \max\limits_{1\leqslant n \leqslant N} D_{n,N}
        }{
            \min\limits_{1\leqslant n \leqslant N} D_{n,N}
        } \leqslant R.
    \]
\end{definition}

\begin{definition}
  \label{def:scattering}
  Let $K_d\subset \mathbb{R}^d$, $d\geqslant d_0$ be a sequence of compact sets.
  We call a set $\{\}_{n\in\mathbb{N},d\geqslant d_0}$ a 
  \textit{series of quasi-uniform scatterings} with exponent~$R$ and support $K_\bullet$
  if for any $d\geqslant d_0$ there exist maps $\iota_{d_1,d_2}\colon\mathbb{R}^{d_1} \to \mathbb{R}^{d_2}$
  such that for any $d_1<d_2$
    \begin{enumerate}
      \item $\iota_{d_2, d_3} \circ \iota_{d_1,d_2} = \iota_{d_1,d_3}$ (composition),
      \item $\iota_{d_2,d_2}(K_1) \subseteq K_2$ (embedding),
        \item $\rho(x,y) = \rho(\iota_{d_1,d_2}(x), \iota_{d_1,d_2})$ for 
          any~$x,y\in K_1$ (isometry),
        \item the sequence~$\{\iota_{d_1,d_2}(C_{d_1,n})\}_{n=1}^\infty$
            is a subsequence in~$\{C_{d_2,n}\}_{n=1}^\infty$ for 
            any~$d_1<d_2$.
    \end{enumerate}
\end{definition}

\begin{definition}
  \label{def:search}
  A sequence~$\{C_{d,n}\}_{n=1}^\infty$ admits the property of 
  \textit{fast search} if for any~$e\in\mathbb{R}^d$ and any~$n\in\mathbb{N}$
  there is an algorithm for search of nearest~$k$ points of an initial segment~$\{C_{d,n}\}_{n=1}^N$
  taking at most~$O(k)$ operations. 

  We say that the series of scatterings admit this property if all scattering 
  admits this property. 
\end{definition}

\begin{remark}
  Stated above estimation of algorithms like~$O(k)$ operations is not uniform in $d$. 
  So, the algorithms are linear in~$k$ and may have increasing asymptotic in~$d$. 
\end{remark}

We will consider here the cases where the sets~$K_d$ are spheres or surfaces of polyhedra. 

The main goal of present paper is to derive the construction of series of scatterings admitting fast search.
So, constructed series of sequences of points can be used as predefined centers of clusters.
Therefore, these constructions are useful for classifications without 
a classifying layer with fast search of nearest cluster centers and other embeddings. 
Also, this construction allows to extend the set of classes in such a way that 
a classifying neural network can classify already present classes and only needs fine tuning for new classes.

The paper is organized as follows. 
Section~2 contains some basic notation. 
Section~3 contains the construction of sets of points on the boundary of weight polyhedra. 
Section~4 contains nearest cluster center search algorithms. 

\section{Preliminaries}

We will use the following notation:
\begin{itemize}
    \item $G$~--- reductive group of some root system, say, $A_n$,
    \item $G \supset B \supset T$~--- some Borel subgroup and maximal torus,
    \item $\mathfrak{X}$~--- weight lattice with coordinates $x_1,\ldots,x_{n+1}$ 
        of~$G\supset B \supset T$,
    \item $W$~--- Weyl group,
    \item Weyl camera is a cone of dominant weights 
        (for $A_n$, $x_1\geqslant x_2\geqslant\ldots\geqslant x_{n+1}\geqslant 0$), 
        and its images for the $W$ actions
    \item $\lambda$~--- Young diagram with rows~$(\lambda_1,\ldots,\lambda_n)$,
        where~$n = \operatorname{rank}(\mathfrak{X})$,
    \item $\lambda'$~--- transposed Young diagram,
    \item $\mathfrak{X}_{\mathbb{Q}} := \mathfrak{X}\otimes_{\mathbb{Z}} \mathbb{Q}$~---
        vector space over~$\mathbb{Q}$ spanned by~$\mathfrak{X}$;
        analogously for~$\mathfrak{X}_\mathbb{R}$,
    \item word corresponding to a diagram is a word over alphabet~$\{1,\ldots,n+1\}$
        from rows from above to below,
    \item a Young Tableau is a Young diagram filled with~$\{1,\ldots,n+1\}$
        each element is in one of the boxes, such that entries in rows do decrease
        and entries in column increase,
    \item lexicographic ordering in words in alphabet~$\{x_1,\ldots,x_{n+1}\}$
        supposes that $x_1 \geqslant \ldots \geqslant x_{n+1}$
        (these are not geometrical inequalities, but symbol ordering: the symbol~$x_1$
        has higher order than~$x_2$ and so on).
\end{itemize}

One can find all the definitions for this notation in~\cite{R}. 

Cosine distance between a pair of vectors~$u,v$ is defined as
\[
  1 - \frac{u \cdot v}{||u||_2||v||_2}.
\]
It is not a distance function on~$\mathbb{R}^n$, but on the unit sphere~$S^{n-1}\subset\mathbb{R}^n$ 
it is a well defined metric.

\section{Weight polyhedron and its boundary points construction algorithm}

We need to construct a series of points to predefine them as 
centers of clusters in the embedding space. 
We choose them as integral points (i.\,e. lying in the lattice~$\mathfrak{X}$)
on the boundary of some symmetric polyhedron. 
Given a dominant weight~$w_\lambda$, we choose a polyhedron~$P_\lambda$ 
as the convex hull of Weyl group~$W$ orbit of~$w_\lambda$. 
For the case of root system~$A_n$, this orbit is just a set of 
coordinates permutations of vector~$w_\lambda$.

We start the construction from a Young diagram~$\lambda$, where 
vectors in~$W\cdot w_\lambda$ correspond to semi-standard Young tableau. 
Although there can be many semi-standard Young tableau with the same word,
they give the same point in the space~$\mathfrak{X}_{\mathbb{R}}$, if 
the words can be obtained by a single permutation from each other. 
So, there is a canonical way to choose a single word representing 
a weight. We can just take non-decreasing words.

Although one could take a set of points to represent a set of cluster centers 
and train a neural network with euclidean distance, the cosine distance 
is more widely used and usually provides better convergence. 
It does not distinguish point on the same ray from zero. 
So, we take only the points on the boundary of~$P_\lambda$ to avoid
getting indistinguishable or very close in cosine distance points. 

For cosine distance, it does not matter if the set of point is located 
on a unit sphere, or on a boundary of a convex compact set containing zero inside. 
So, it is easier to construct the points on a polyhedron boundary. 
At the same time, search algorithms of the closest cluster center 
slightly differ for euclidean and cosine distance~(see~\S4).

In this section we cover in details two questions:
\begin{itemize}
    \item
        Given a dominant weight $w_\lambda$ of a reductive group~$G$, how to
        effectively construct all weights~$\partial P_\lambda \cap \mathfrak{X}$ 
        on the weight polyhedron~$P_\lambda$ boundary?
    \item How to proceed the subdivision step and construct all the points in
        $\partial P_\lambda \cap (\mathfrak{X} / 2^k)$ for~$k\in\mathbb{N}$?
\end{itemize}

There is an important property of these sets of points: if we count Euclidean 
distance from each point of~$\partial P_\lambda \cap (\mathfrak{X} / 2^{k})$ for
$k\in\mathbb{Z}_{\geqslant 0}$ for the nearest neighbor, all these distances will coincide.

Although the same statement fails for cosine distance counted from the 
center~$c_\lambda$ of $P_\lambda$, the ratio of maximal and minimal of them cannot exceed
$$
    \frac{
        \max\limits_{w\in\partial P_\lambda}||w-c_\lambda||
    }{
        \min\limits_{w\in\partial P_\lambda}||w-c_\lambda||
    }.
$$

\subsection{Construction of $\partial P_{\lambda} \cap \mathfrak{X}$}

First, we need to construct algorithm  enumerating all semi-standard Young 
tableau of form~$\lambda$ corresponding to a non-decreasing word and a point 
inside the dominant Weyl camera~$C$.

We start with the initial Young tableau obtained from $\lambda$ by
filling the first row by~$1$, the second by~$2$ and so on.

After, we run over Young tableau in lexicographic ordering and check the rules below.
If a diagram does not satisfy the rules, we skip all Young tableaux lower it in lexicographic order.

\begin{itemize}
  \item[\textbf{Rule 1.}] Condition on Weyl camera: number of <<$1$>> entries
    is not less than the number of <<$2$>> entries, the number of <<$3$>> entries does 
    not exceed the number of $2$ entries and so on.
  \item[\textbf{Rule 2.}] Boundary condition.
    Suppose that Young tableau is filled in such way that the corresponding word
    is non-decreasing. Then there is at least one row~$\lambda_i$ finishing
    with the box containing~$i$.
\end{itemize}

\begin{remark}
    There is a vertex $w_\lambda = (\lambda_1,\ldots,\lambda_n)\in\mathfrak{X}$ of~$P_\lambda$.
    All the other vertices of~$P_\lambda$ are just $W$-orbit of~$w_\lambda$.
    
    Faces of~$P_\lambda$ are given by inequalities
    \begin{equation}
      \label{eq:ehw}
      \begin{aligned}
          x_1 &\leqslant \lambda_1, \\
          x_1+x_2 &\leqslant \lambda_1 + \lambda_2, \\
          &\ldots \
      \end{aligned}
    \end{equation}
    So, to get a point on~$\partial P_\lambda$, we should check that
    at least one of inequations is not strict (i.\,e.~becomes an equation).
\end{remark}

\begin{remark}
    Algorithm takes $O(k)$ computational operations to get $k$ points.
    For each move to another point, one should perform constant number of operations~($O(1)$).
    If we get a new boundary point, it contributes to both number of found points
    and the number of computational operations.
    If a new point~$w'$ does not satisfy some of the rules, 
    the point is in the unit cube with upper corner at~$w$ (previous point).
    So, for $k$ resulting points we verify at most~$2^n\cdot k$ points, so
    we make $O(k)$ computational operations.
\end{remark}

\begin{example}
  Consider the group $\operatorname{GL}(4)$ corresponding to $A_3$ 
  root system and $\lambda = \Yvcentermath1\yng(2,1,1)$.
  Actually, $\Yvcentermath1\young(11,2,3)$ is the only correct filled diagram. 
  It contains two $1$'s, one $2$ and $3$ and no $4$, so we get point $(2,1,1,0)$. 
  Other points are obtained as permutations. 
  \medskip
  We can get the same polyhedron, if we subtract a constant from each coordinate 
  and permute~$(1,0,0,-1)$.
  Actually, for $A_n$ and diagram $\lambda=(2,1\ldots,1,0)$ we always get here
  the same case of unique point and its permutations.
\end{example}

\begin{example}
  Consider the group $\operatorname{GL}(5)$ corresponding to $A_4$ 
  root system and $\lambda = \Yvcentermath1\yng(2,2,1,1)$.
  There are only two ways to fill the diagram:
  \[
    \Yvcentermath1
    \young(11,22,3,4) \text{ and } \young(11,23,4,5).
  \]
  So, we get points $(2,2,1,1,0)$ and $(2,1,1,1,1)$ and their permutations.
  Or, equivalently, $(1,1,0,0,-1)$ and $(1,0,0,0,0)$. 
  \medskip

  The same holds for $\lambda=(2,2,1,\ldots,1,0)$ for $A_n$.
\end{example}

\begin{example}
  Consider the group $\operatorname{GL}(4)$ corresponding to $A_3$ 
  root system and $\lambda = \Yvcentermath1\yng(3,2,1)$.
  Filled diagrams are
  \[
    \Yvcentermath1
    \young(111,22,3) \text{, } \young(111,23,4) \text{ and } \young(112,23,3).
  \]
  The points are~$(3,2,1,0)$, $(3,1,1,1)$, $(2,2,2,0)$.
  These points and there permutations are integral boundary points of the permutahedron.
  There is a diagram
  \[
    \Yvcentermath1
    \young(112,23,4)
  \]
  corresponding to the point~$(2,2,1,1)$.
  It does not satisfy the boundary rule and corresponds to a 
  point inside the polyhedron
  (Although it lies in the dominant cone). 
  So, we do not consider it.
\end{example}

\begin{theorem}
  \label{theo:CPX}
    The algorithm above runs over all points of $C \cap \partial  P_\lambda \cap \mathfrak{X}$.
\end{theorem}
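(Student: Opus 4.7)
The plan is to split the statement into three parts: soundness (every tableau the algorithm accepts encodes a point of $C\cap\partial P_\lambda\cap\mathfrak{X}$), representation (every such point has a unique canonical tableau), and enumeration (the lexicographic scan with the skip rule visits every canonical tableau).

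First I would make the tableau-to-weight correspondence precise. To a semi-standard tableau $T$ of shape $\lambda$ with entries in $\{1,\dots,n+1\}$ associate its content $c(T)=(m_1,\dots,m_{n+1})$, where $m_i$ counts boxes labelled $i$. Standard weight-polytope theory (\cite{R}) identifies $\{c(T)\}$ with $P_\lambda\cap\mathfrak{X}$, and two SSYT share a content iff their row-reading words are rearrangements, so the non-decreasing normalisation $1^{m_1}2^{m_2}\cdots(n+1)^{m_{n+1}}$ singles out one canonical tableau per content. This puts the iteration space in bijection with $P_\lambda\cap\mathfrak{X}$.

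Next I would verify that Rules~1 and~2 cut this set down to $C\cap\partial P_\lambda\cap\mathfrak{X}$. Rule~1 asks $m_1\geqslant\cdots\geqslant m_{n+1}$, which is exactly $c(T)\in C$ for type $A_n$. For Rule~2, the boxes of $T$ carrying entries $\leqslant i$ form a Young sub-diagram of $\lambda$ (rows weakly, columns strictly increasing); column strictness bounds this sub-diagram to at most $i$ rows, so
\[
  m_1+\cdots+m_i\leqslant\lambda_1+\cdots+\lambda_i,
\]
with equality iff rows $1,\dots,i$ are filled only with entries $\leqslant i$, i.e.\ $T(i,\lambda_i)=i$ (the reverse inequality $T(i,\lambda_i)\geqslant i$ being automatic). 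Within $C$ only the dominant face inequalities~(\ref{eq:ehw}) can be saturated, so Rule~2 characterises $\partial P_\lambda\cap C$ exactly.

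The hardest step is the correctness of the skip optimisation: one has to show that when the current tableau fails Rule~1 or Rule~2, every tableau in the pruned lex-sub-range also fails. I would formalise this by a monotonicity argument on the lex order described in the preamble, showing that the offending chamber inequality (Rule~1) or prefix-sum inequality (Rule~2) is inherited by every content lying lex-below the rejected one in the sub-range. Precisely describing the ``next tableau'' step and ruling out any later repair is where I expect the main technical difficulty; once it is handled, it combines with the soundness and bijection above to give the theorem.
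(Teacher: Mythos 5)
Your proposal follows the same three-part skeleton that the paper's (very terse) proof implicitly relies on: identify contents of canonical non-decreasing-word tableaux with points of $P_\lambda\cap\mathfrak{X}$, read Rule~1 as the dominant-chamber cut and Rule~2 as the boundary cut, then argue the lexicographic scan is exhaustive. Where you add real value is in the Rule~2 analysis: the paper merely points at the face inequalities~(\ref{eq:ehw}), while you show precisely that for a canonical tableau the $i$-th inequality saturates iff $T(i,\lambda_i)=i$, and that the non-decreasing-word normalization is exactly what makes the ``$\Leftarrow$'' direction true (in a generic SSYT, a row~$j<i$ could still carry entries exceeding~$i$ in columns beyond~$\lambda_i$). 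That is a correct and useful tightening the paper leaves implicit.

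Where you diverge from the paper is instructive in the other direction. The paper's proof does not engage with the skip heuristic at all --- it simply asserts that ``enumeration of all non-decreasing words in lexicographic order effectively runs over all points in $\mathfrak{X}\cap P_\lambda$'' and then treats Rules~1 and~2 as filters applied to that complete enumeration. Under that reading, completeness is tautological and the skip is purely an efficiency optimization to be justified in the $O(k)$ remark, not in the theorem. You instead treat the skip as load-bearing for completeness and correctly identify it as the one place where a genuine monotonicity lemma would be needed (that a failed Rule~1 or Rule~2 inequality is inherited by everything in the pruned lex-sub-range). You do not close this step, so your proof as written is an outline with a declared hole; but you should note that the paper's proof has the same hole, just silently --- the phrase ``we skip all Young tableaux lower it in lexicographic order'' as literally stated would prune far too much, and nothing in the paper's proof rules out a miss. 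Flagging this explicitly, as you did, is the more honest reading of what the theorem actually requires.
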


\begin{proof}
  Enumeration of all non-decreasing words in lexicographic order effectively 
  runs over all points in~$\mathfrak{X}\cap P_\lambda$.
  We deduce enumeration of all words to only dominant and then act by~$W$ and get all 
  points, so we visit all the points. 
  Next, we consider only points satisfying boundary conditions for faces 
  containing~$w_\lambda$. 
  Since we have already deduced enumeration only to the dominant cone,
  we can consider only boundary conditions arising from faces containing~$w_\lambda$. 
\end{proof}


\subsection{Construction of $\partial P_{\lambda} \cap (\mathfrak{X} / 2^k)$}

We will run over points (or, semi-standard Young tableau) collected on the
previous step of construction (or, step for~$k>1$).

Firstly, we multiply the diagram horizontally be~$2$, i.\,e.take each column two times
(for $k=1$, or $2^{k-m}$ times for $k>1$ and a diagram from step~$m$, where~$m=0$
corresponds to initial polyhedron construction). Denote obtained Young tableau by~$\mu$.

In lexicographic order, we run over subsets of rows of~$\mu$:
\begin{itemize}
    \item for a subset, we increment the last box entry and mark it;
    \item if~$k>1$, we increment last unmarked box;
    \item when we multiply tableau horizontally, marks are populated also;
    \item check the conditions of Weyl camera, boundary condition,
        increase of word corresponding to a diagram;
        if a diagram fail any condition, we skip it.
\end{itemize}

\begin{example}
  Consider the group~$\operatorname{GL}(4)$ corresponding 
  to~$A_3$ and~$\lambda=\Yvcentermath1\yng(2,1,1)$.
  \medskip 

  The only semi-standard Young tableau corresponding to a 
  dominant weight on the boundary of~$P_\lambda$ is~$\Yvcentermath1\young(11,2,3)$.
  Let us proceed two subdivision steps. 
  First, multiply the diagram: $\Yvcentermath1\young(1111,22,33)$.
  Now, add marks and increment some cells:
  \[
    \Yvcentermath1
    \begin{ytableau}
      1 & 1 & 1 & 1 \\ 
      2 & 2 \\ 
      3 & *(red) 4 \\
    \end{ytableau},\,
    \begin{ytableau}
      1 & 1 & 1 & *(red) 2 \\ 
      2 & 2 \\ 
      3 & 3 \\
    \end{ytableau},\,
    \begin{ytableau}
      1 & 1 & 1 & *(red) 2 \\ 
      2 & 2 \\ 
      3 & *(red)4 \\
    \end{ytableau}.
  \]
  Initial point gives point~$(2,1,1,0)$. These diagrams 
  give~$\left(2,1,\frac12,\frac12\right)$, $\left(\frac32,\frac32,1,0\right)$ 
  and~$\left(\frac32,\frac32,\frac12,\frac12\right)$.
  \medskip

  Let us make another subdivision step and get 
  \[
    \Yvcentermath1
    \begin{ytableau}
      1 & 1 & 1 & 1 & 1 & 1 & 1 & 1 \\ 
      2 & 2 & 2 & 2 \\ 
      3 & 3 & 3 & *(red) 4 \\
    \end{ytableau}: \left(2,1,\frac34,\frac14\right),\,
  \]
  \[
    \Yvcentermath1
    \begin{ytableau}
      1 & 1 & 1 & 1 & 1 & 1 & 1 & 1 \\ 
      2 & 2 & 2 & *(red)3 \\ 
      3 & 3 & *(red)4 & *(red) 4 \\
    \end{ytableau}: \left(2,\frac34,\frac34,\frac12\right),\,
  \]
  \[
    \Yvcentermath1
    \begin{ytableau}
      1 & 1 & 1 & 1 & 1 & 1 & 1 & *(red) 2 \\ 
      2 & 2 & 2 & 2 \\ 
      3 & 3 & 3 & 3 \\
    \end{ytableau}: \left(\frac74,\frac54,1,0\right),\,
  \]
  \[
    \Yvcentermath1
    \begin{ytableau}
      1 & 1 & 1 & 1 & 1 & 1 & 1 & *(red) 2 \\ 
      2 & 2 & 2 & 2 \\ 
      3 & 3 & 3 & *(red)4 \\
    \end{ytableau}: \left(\frac74,\frac54,\frac34,\frac14\right),\,
  \]
  \[
    \Yvcentermath1
    \begin{ytableau}
      1 & 1 & 1 & 1 & 1 & 1 & 1 & *(red) 2 \\ 
      2 & 2 & 2 & 2 \\ 
      3 & 3 & *(red)4 & *(red)4 \\
    \end{ytableau}: \left(\frac74,\frac54,\frac12,\frac12\right).
  \]
\end{example}

\begin{example}
  Consider the group~$\operatorname{GL}(5)$ corresponding 
  to~$A_4$ and~$\lambda=\Yvcentermath1\yng(2,2,1,1)$.
  \medskip 

  Initial construction gives two diagrams corresponding to two points before $W$ action:
  \[
    \Yvcentermath1
    \young(11,22,3,4): (2,2,1,1,0) \text{ and } \young(11,23,4,5): (2,1,1,1,1).
  \]
  Let us make subdivision and get more points:
  \[
    \Yvcentermath1
    \begin{ytableau}
      1 & 1 & 1 & 1 \\ 
      2 & 2 & 2 & 2 \\ 
      3 & 3 \\
      4 & *(red)5 \\
    \end{ytableau}: \left(2,2,1,\frac12,\frac12\right),\quad
    \Yvcentermath1
    \begin{ytableau}
      1 & 1 & 1 & 1 \\ 
      2 & 2 & 2 & *(red)3 \\ 
      3 & 3 \\
      4 & 4 \\
    \end{ytableau}: \left(2,\frac32,\frac32,1,0\right),\,
  \]
  \[
    \Yvcentermath1
    \begin{ytableau}
      1 & 1 & 1 & 1 \\ 
      2 & 2 & 2 & *(red)3 \\ 
      3 & 3 \\
      4 & *(red)5 \\
    \end{ytableau}: \left(2,\frac32,\frac32,\frac12,\frac12\right),\quad
    \Yvcentermath1
    \begin{ytableau}
      1 & 1 & 1 & 1 \\ 
      2 & 2 & 2 & *(red)3 \\ 
      3 & *(red)4 \\
      4 & *(red)5 \\
    \end{ytableau}: \left(2,\frac32,1,1,\frac12\right),\,
  \]
  \[
    \Yvcentermath1
    \begin{ytableau}
      1 & 1 & 1 & *(red)2 \\ 
      2 & 2 & 3 & 3 \\ 
      4 & 4 \\
      5 & 5 \\
    \end{ytableau}: \left(\frac32,\frac32,1,1,1\right).
  \]
\end{example}

\begin{theorem}
  \label{theo:CPX2}
  The algorithm above runs over all points of $C \cap \partial  P_\lambda \cap (\mathfrak{X} / 2^k)$ for any~$k\geqslant1$.
\end{theorem}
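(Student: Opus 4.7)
The plan is to reduce Theorem~\ref{theo:CPX2} to Theorem~\ref{theo:CPX} by rescaling, and then verify that the bookkeeping of marked cells carries out exactly the right combinatorial correspondence. The starting observation is that $P_{2^{k}\lambda} = 2^{k} P_\lambda$, since both are the convex hull of the $W$-orbit of the dominant weight $2^{k} w_\lambda = w_{2^{k}\lambda}$. Multiplication by $2^{k}$ therefore gives a bijection
\[
  \partial P_\lambda \cap (\mathfrak{X}/2^{k}) \;\longleftrightarrow\; \partial P_{2^{k}\lambda} \cap \mathfrak{X},
\]
which restricts to a bijection on the dominant Weyl chamber. By Theorem~\ref{theo:CPX} applied to the dominant weight $w_{2^{k}\lambda}$, the target set is enumerated by semi-standard Young tableaux of the diagram obtained from $\lambda$ by repeating each column $2^{k}$ times, subject to Rules~1 and~2. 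So it suffices to show that the subdivision algorithm, read as a procedure on stretched diagrams, visits exactly these tableaux.

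Next I would argue by induction on $k$. The base case $k=1$ starts from the enumeration of $\partial P_\lambda\cap\mathfrak{X}\cap C$ provided by Theorem~\ref{theo:CPX}. After horizontally doubling a tableau $T$ of shape $\lambda$ to a tableau $\mu$ of shape $2\lambda$, the weight of $\mu$ is $2w_T$. Any admissible tableau of shape $2\lambda$ with dominant weight strictly above $\mu$ is reached by incrementing entries, and the column-doubled structure forces the only legal increments to occur at the rightmost cell of each row (any other increment would violate column strict increase or row weak decrease against the duplicated column to its right). Running over subsets of rows in lexicographic order and pruning by Rules~1 and~2 therefore hits every dominant boundary point of $P_{2\lambda}\cap\mathfrak{X}$, which by rescaling is precisely $C\cap\partial P_\lambda\cap(\mathfrak{X}/2)$.

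For the inductive step I would reinterpret the marked cells as a record of earlier increments that must be preserved through further doublings. When marks propagate under horizontal doubling, the marked region of the tableau represents the ``frozen'' contribution from level $k-1$, while the unmarked subdiagram is itself a column-doubled piece on which the base-step analysis applies verbatim. Hence ``increment the last unmarked box'' plays the role of ``increment the last box'' for this unmarked subdiagram, and the pruning rules are unchanged. Combining this with the inductive hypothesis, every starting tableau at level $k-1$ spawns exactly the admissible continuations at level $k$ with the prescribed marked backbone, and every dominant boundary point of $P_{2^{k}\lambda}\cap\mathfrak{X}$ arises from exactly one such pair.

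The main obstacle I expect is the detailed combinatorial bookkeeping of marks across repeated doublings: one must verify that the map ``starting tableau at level $k-1$ plus unmarked increment pattern'' $\mapsto$ ``stretched semi-standard tableau at level $k$'' is a bijection onto the admissible tableaux, so that no point is missed and none is produced twice. Once this is established, completeness follows from Theorem~\ref{theo:CPX} applied to $w_{2^{k}\lambda}$ and the rescaling bijection above.
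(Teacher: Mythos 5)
Your overall strategy --- rescale $\partial P_\lambda \cap (\mathfrak{X}/2^k)$ to $\partial P_{2^k\lambda}\cap\mathfrak{X}$, apply Theorem~\ref{theo:CPX} to the stretched diagram, and handle the marks by induction on $k$ --- is essentially the same as the paper's proof, which identifies $\partial P_{2\lambda}\cap\mathfrak{X}$ with $\partial P_\lambda\cap(\mathfrak{X}/2)$, notes that the even-coordinate points come from doubling the previous step's tableaux, and then defers to ``similar to Theorem~\ref{theo:CPX}.''

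One of your supporting claims is overstated, though, and it is the one carrying the completeness burden. You assert that after horizontal doubling ``the only legal increments occur at the rightmost cell of each row (any other increment would violate column strict increase or row weak decrease against the duplicated column to its right).'' That is not true as a statement about semi-standard tableaux. Take $\lambda=(3,2,1)$, $S=\young(112,23,3)$, so $2S=\young(111122,2233,33)$; incrementing the \emph{fourth} cell of row~1 (a $1$ sitting at a column-pair boundary, above a $3$) gives $\young(111222,2233,33)$, which is a perfectly valid semi-standard tableau of shape $2\lambda$. So the forcing does not come from the column-doubled structure by itself. What actually happens is that such ``interior'' increments produce weights that are non-dominant or lie off $\partial P_{2\lambda}$ (here $(3,5,4,0)$ fails Rule~1) and therefore get pruned. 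In other words, completeness must be argued via the pruning rules, not via a rigidity of the tableau shape. This is the step the paper itself leaves implicit (``similar to Theorem~\ref{theo:CPX}''), and as you yourself note at the end, the bijection between ``level-$(k{-}1)$ tableau plus unmarked increment pattern'' and ``admissible stretched tableau at level $k$'' is exactly the bookkeeping that still needs to be nailed down; your forcing parenthetical does not establish it.
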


\begin{proof}
  Horizontally doubled Young diagram corresponds to run over polyhedron 
  points $\partial P_{2\lambda}\cap\mathfrak{X}=\partial (2P_{\lambda})\cap\mathfrak{X}$ 
  or, equivvalently, $\partial P_\lambda\cap (\mathfrak{X}/2)$. 
  Points with all even coordinates can be obtained from taken on 
  the previous subdivision step, if we double semi-standard Young tableau with all entries.
  Remaining part of proof is similar to Theorem~\ref{theo:CPX}.
\end{proof}

When the step of subdivision is done, all the euclidean distances from a point
to the nearest neighbor coincide. Between the steps the ratio of highest
and lowest ones is equal or does not exceed~$2$.
This shows that the constructed sequence of points satisfies conditions of~\eqref{def:scattering}.
So, we can conclude:

\begin{proposition}
  Given a Young diagram~$\lambda$, consider the sequence~$C$ of points in 
  the compact set~$K := P_\lambda \cap \mathfrak{X}_{\mathbb{R}}$ in the following way.
  For each~$k\geqslant1$ construct the points in $C \cap \partial P_\lambda \cap \mathfrak{X}_{\mathbb{Q}}$ as above,
  then take all their $W$-orbits, sort the set of new points lexicographically and queue to the sequence~$C$.
  Then sequence~$C$ is a quasi-uniform scattering with exponent~$R=2$.
\end{proposition}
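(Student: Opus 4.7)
My plan is to exploit the block structure of the enumeration. Let $S_k := \partial P_\lambda \cap (\mathfrak{X}/2^k)$ and $N_k := |S_k|$. By Theorem~\ref{theo:CPX2} together with closure under the $W$-action, the initial segment $\{C_1,\ldots,C_{N_k}\}$ coincides with $S_k$ as a set, and within each block $S_{k+1}\setminus S_k$ new points are listed through $W$-orbits in lexicographic order. Since the nested family $S_0 \subset S_1 \subset S_2 \subset \cdots$ exhausts $C$, every index $N$ lies in exactly one range $N_k \leqslant N \leqslant N_{k+1}$, and it suffices to bound $\max_n D_{n,N}/\min_n D_{n,N}$ by $2$ there.

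At the completion index $N = N_k$, I would invoke the geometric fact stated immediately before the proposition: once refinement at level $k$ is finished, every point of $S_k$ has the same nearest-neighbor distance, call it $d_k$, and consecutive values satisfy $d_{k+1} = d_k/2$. The ratio is then exactly $1$. For intermediate $N \in (N_k, N_{k+1})$ I would argue by monotonicity: the function $N \mapsto D_{n,N}$ is non-increasing, since enlarging the segment can only reveal a closer neighbor. Hence for every $n \leqslant N_k$, $D_{n,N} \leqslant D_{n,N_k} = d_k$; for a newly enumerated index $n$ with $N_k < n \leqslant N$, the refinement construction places $C_n$ at distance $d_{k+1}$ from some point of $S_k$ already in the segment, so $D_{n,N} \leqslant d_{k+1} \leqslant d_k$ as well. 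In the other direction, every pairwise distance inside $S_{k+1}$ is at least $d_{k+1}$, giving $D_{n,N} \geqslant d_{k+1}$. Combining yields the desired ratio bound $d_k/d_{k+1} = 2$.

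The main obstacle is the uniformity assertion borrowed from the preceding discussion, which I regard as the substantive content. The Weyl group $W$ acts transitively on the vertices of $P_\lambda$ but only stabilizes (rather than acts transitively on) the lower-dimensional face strata, so equality of nearest-neighbor distances across vertices, edge-interior points, and higher-face-interior points is not automatic from symmetry. I would establish it face by face: each open face $F$ of $P_\lambda$ has a parabolic stabilizer $W_F \subseteq W$ acting on the lattice pattern $S_k \cap F$ with a single type of minimal displacement, giving a constant nearest-neighbor distance on $F$; matching the common values across adjacent strata then reduces to comparing displacements along simple-root directions, using $W$-invariance of the ambient inner product. The identity $d_{k+1} = d_k/2$ itself follows directly from horizontal doubling of $\lambda$: on $\partial(2P_\lambda)\cap\mathfrak{X} = \partial P_\lambda \cap (\mathfrak{X}/2)$ after rescaling, the lattice spacing on the fixed polyhedron is halved. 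With this in hand, the two bullet cases of the previous paragraph deliver $R = 2$.
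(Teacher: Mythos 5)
Your overall decomposition matches the paper's own treatment exactly: the paper never issues a formal \texttt{proof} environment for this proposition but instead relies on the two-sentence remark immediately preceding it (uniform nearest-neighbor distance $d_k$ at the end of each subdivision step, ratio at most $2$ between steps). Your block structure $S_k\subset S_{k+1}$ with the monotonicity of $N\mapsto D_{n,N}$ is a clean way to make the ``between the steps'' claim precise, and you are right to flag the uniformity assertion as the substantive (and in the paper unproven) geometric input; your face-stratum sketch via parabolic stabilizers is a sensible route to it.

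However, your argument for the newly enumerated indices contains a false sub-claim. You assert that each new point $C_n\in S_{k+1}\setminus S_k$ lies at distance $d_{k+1}$ from some point of $S_k$. That is not true. Take $A_3$, $\lambda=(2,1,1,0)$, where $d_0=\sqrt2$ and $d_1=\sqrt{1/2}$. The new point $\left(\tfrac32,\tfrac32,\tfrac12,\tfrac12\right)\in S_1\setminus S_0$ arises from two increments of the doubled tableau; its nearest neighbors at distance $d_1$ in $S_1$ are other new points such as $\left(2,1,\tfrac12,\tfrac12\right)$ and $\left(\tfrac32,\tfrac32,1,0\right)$, while the nearest point of $S_0$ is $(2,1,1,0)$ (or a permutation thereof) at distance $1=\sqrt2\,d_1>d_1$. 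So neither the claimed distance nor the claimed witness in $S_k$ is correct. What you actually need, and what is still true here, is the weaker bound: every point of $S_{k+1}$ lies within $d_k=2d_{k+1}$ of some point of $S_k$, which suffices since $S_k$ is already fully contained in the segment. In the example $1\leqslant\sqrt2$, so the bound holds, but this weaker statement also requires an argument (each increment moves the doubled weight by one root, and one must bound the accumulated displacement after several increments by $d_k$). You should replace the $d_{k+1}$ claim by this $d_k$ bound and supply that estimate; as written the step is both incorrect and stronger than needed.
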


\begin{example}
  Consider $A_3$ and~$\Yvcentermath1\lambda=\yng(2,1,1)$ the sequence~$C$ as follows:
  \begin{multline*}
    (2,1,1,0), (2,1,0,1), (2,0,1,1), 
    (1,2,1,0), (1,2,0,1), (1,1,2,0), \\ (1,1,0,2),
    (1,0,2,1), (1,0,1,2),
    (0,2,1,1), (0,1,2,1), (0,1,1,2), \\
    \left(2,1,\frac12,\frac12\right)
    \left(2,\frac12,1,\frac12\right)
    \left(2,\frac12,\frac12,1\right)
    \left(\frac32,\frac32,1,0\right)
    \left(\frac32,\frac32,\frac12,\frac12\right), ...
  \end{multline*}
  Denote sequence for $A_4$ and $(2,1,1,1)$ by~$C'$. 
  If we append~$1$ at the end of each~$C$ entry, it becomes a subsequence in~$C'$. 
  It is the way how scatterings can be chained into a series of quasi-uniform scatterings. 
  \medskip

  We should append~$1$, because we extend groups~$\operatorname{GL}(4)\subset\operatorname{GL}(5)$ and extend diagrams~$(2,1,1)\mapsto(2,1,1,1)$.
  So, highest weight vector is obtained from diagram with an additional box containing a new digit not used before, this gives~$1$ in a new coordinate.
  \medskip

  It is not a unique way how such scatterings can be chained into series. 
  For instance, we can chain~$(A_n, \lambda=(2,2,1,\ldots,1))$ and get other series of points. 
  Or, for~$(A_n,\lambda=(n,n-1,\ldots,1))$ we get series of points on permutahedrons.
\end{example}

\begin{remark}
  Theorem~\ref{theo:CPX2} states the contraction of quasi-uniform scatterings in euclidean metric. 
  If one consider the case of cosine metric, the statement holds for some bigger~$R$, 
  namely,~$R=2\frac{\max\limits_{v\in P_{\lambda}}||v||_2}{\min\limits_{v\in P_{\lambda}}||v||_2}$.
\end{remark}

\section{Search algorithm}

Given some number of predefined cluster centers and data samples, 
for an arbitrary embedding~$e$ we are interested to solve two problems:
\begin{enumerate}
  \item find nearest cluster center,
  \item find nearest one or more data embeddings. 
\end{enumerate}
We can predefine cluster centers in order to simplify search algorithm. 
Although data samples would not be any kind of regular in the embedding space,
we can decrease search range for data samples and consider only data 
corresponding to some number of close clusters.
For a huge number of classes and locally stored clusters this can 
significantly simplify the problem of nearest data embedding search. 

For both cases, the problem is deduced to search of one or more nearest 
cluster centers for an arbitrary point in the embedding space.

Suppose we are given an embedding $e=(e_1,\ldots,e_n) \in \mathfrak{X}_{\mathbb{R}}$.
We have to find the closest of constructed points for~$O(1)$ operations (or, $k$ closest points for~$O(k)$ operations).

Without loss of generality, we can assume that~$e$ lies in the dominant cone 
(for $A_n$ it means that $e_1 \geqslant \ldots \geqslant e_n$).
Otherwise we can act by~$W$ and take one orbit point~$w\cdot e$, $w \in W$ in the dominant cone. 
After we find nearest cluster centers $c_1,\ldots,c_k$ for~$w\cdot e$, cluster centers 
$w^{-1}\cdot c_i$, $i=1,\ldots,k$ will be nearest cluster centers for~$e$.  

If a point~$e$ is strictly inside a dominant cone, the closest faces to $e$ are
faces containing~$w_\lambda$. Otherwise, there will be other closest faces.
Anyway, there is a finite fixed set of hyper-faces of $P_\lambda$ nearest to~$e$.

Actually, we can simply find the unique nearest cluster center. 
If we need $k>1$ centers and we have nearest one~$c$, next~$k-1$ can be found by 
width search over~$c$ neighborhood in hyper-faces containing~$c$. 
Since~$c$ lies in at most~$d$ hyper-faces at the same time (this case corresponds to a vertex of~$P_\lambda$),
if we can consider a linearly in~$d$ bounded number of cluster centers to find each next one. 
Therefore, we can find~$k$ nearest cluster centers in $O(k)$ computational operations. 
It only remains to find the nearest cluster center with a finite number of operations. 

Search of the closest cluster center is divided into two parts:
\begin{enumerate}
  \item Find the closest face~$F$ of $P_\lambda$ (not necessarily hyper-face) such that 
    the orthogonal projection~$e'$ of~$e$ into the affine space spanned by $F$ is contained in 
    the interior of~$F$ or $F$ is a $0$-dimensional face, i.\,e.~a~vertex. 
  \item Constructed cluster centers inside~$F$ are part of lattice points 
    inside the affine space. 
    So, the nearest ones can be found just by rounding coordinates of~$e'$ 
    to nearest point with integer numbers (or, integral coordinates 
    divided by~$2^s$, if we have made~$s$ subdivision operations).
\end{enumerate}

The second part is quite simple, so it only remains 
to find the face~$F$ and the projection~$e'$. 

\subsection{Search for the nearest point in a face~$F$ of~$P_\lambda$}

We can find the face~$F$ iteratively. 
The closest to~$e$ hyper-face is one of the faces containing~$w_{\lambda}$. 
Containing them affine subspaces are given by linear equations~\eqref{eq:ehw}.
One can normalize equations and find distance from~$e$ to these affine spaces.
Then one can project~$e$ to a point~$e_1$ on the closest affine space. 
If $e_1$ is in the interior of hyper-face, then~$e'=e_1$, and~$F$ is found. 
Otherwise we can find the next closest hyper-face and project~$e_1$ into the next point~$e_2$ 
inside an affine space of codimension~$2$ and so on. 

This simple reasoning shows that the closest needed face can be 
found in finite number of operations.
Maybe it can be optimized using ordering the distances by absolute value,
then replacement th numbers with orders with same signs and then 
by search of result in hash-table, if for same key the result is already inside the hash-table. 

Another way to find for each face~$F$ adjacent to~$w_{\lambda}$ is to precompute 
areas of dominant cone from where orthogonal projection to the affine 
space spanned by~$F$ is strictly  inside~$F$.
These areas are polyhedral cones. 
Therefore, it is enough for each point~$e$ to calculate a series of linear functions. 
Each face~$F$ corresponds to a subset of these functions, and must be non-negative on the point. 
This algorithm is more compicated, but it can be simpler vectorized and applied on GPU.
These linear functioins can be combined into matrix, so there computation 
on a batch of points~$e$ is just a multiplication of two matrices. 
Remaining part can be implemented as decision tree and parallelized. 

This finishes the proof that constructed above scatterings admits the fast search condition~\ref{def:search}.

\section{Conclusion}

Although the provided method maximally simplifies the nearest embeddings search problem,
it makes network training more complicated.

As in any complicated computational problem,
here is no a silver bullet.
The algorithm does not give a good solution if one takes arbitrary~$n$ for root system $A_n$
and takes an arbitrary dominant weight~$\lambda$.
Here the following problems arise:
\begin{itemize}
    \item High dimension always brings many problems with slow convergence.
    \item In some low dimensions many points become too close to nearest neighbors.
        So, network training can diverge.
\end{itemize}

So, dimension~$n$ and initial highest weight~$w_\lambda$ should be chosen carefully.
But if one knows how many points are needed, then one can precompute a dimension such that 
taken points will be far enough from each other.

So, the hard problem of bottleneck at nearest points search is replaced with
precomputation of parameters of the points configuration.
Also, network training become more complicated.
For some practical cases (see~\cite{NG2}) this algorithm gives good working examples of training.
In a general case, it gives a number of geometrical constructions and ways to tune it for
a particular situation.

For instance, almost anywhere border cases give bad results.
If one take the whole root system as a set of points, the dimension would be high.
If one take $\pi_1+\ldots+\pi_n$ for $A_n$, the procedure gives a permutahedron
in very small dimension, but the points will be very close.
Some intermediate weights can provide better results.

Although all given examples and tested cases only deal with $A$ type root systems,
other classical types $B$, $C$, $D$ or exeptional root systems can also 
give potentially interesting scatterings with useful properties. 

\section{Acknowledgmenets}

The author is grateful to his Kryptonite colleagues Vasily Dolmatov, Dr. Nikita Gabdullin,
Dr. Anton Raskovalov and Ilya Androsov for fruitful discussions of topic and results 
and examination of the construction with real data and big neural network models,
where the approach turned out to be applicable and useful.

\end{document}